\title{Polychromatic Coloring for Half-Planes.}
\author{Shakhar Smorodinsky \thanks{Ben-Gurion University,
Be'er Sheva 84105, Israel. {\tt shakhar@math.bgu.ac.il}} \and Yelena
Yuditsky \thanks{Ben-Gurion University, Be'er Sheva 84105,
Israel. {\tt yuditsky@bgu.ac.il}. This work was done while the
2nd author was an M.Sc. student at Ben-Gurion University under
the supervision of Shakhar Smorodinsky.}}
\renewcommand{\Re}{\mathbb{R}}
\newcommand{\R}{\cal{R}}
\renewcommand{\H}{\cal{H}}
\newcommand{\E}{\cal{E}}
\definecolor{SHAKHAR}{rgb}{0,0.5,0.5}
\newtheorem{theorem}{Theorem}[section]
\newtheorem{lemma}[theorem]{Lemma}
\newtheorem{proposition}[theorem]{Proposition}
\newcommand{\tr}{{\mathcal T}} 
\newcommand{\VC}{\ensuremath{\mathsf{VC}}\xspace}
\newcommand{\Komlos}{K\'o{}m{}l{}o{}s\xspace}
\newcommand{\eps}{\epsilon}
\newcommand{\pth}[2][\!]{#1\left({#2}\right)}
\begin{document}
\maketitle
\begin{abstract}
We prove that for every integer $k$, every finite set of points in
the plane can be $k$-colored so that every half-plane that
contains at least $2k-1$ points, also contains at least one point
from every color class. We also show that the bound $2k-1$ is
best possible. This improves the best previously known lower and
upper bounds of $\frac{4}{3}k$ and $4k-1$ respectively. We also show that every finite set of half-planes can be $k$ colored so that if a point $p$ belongs to a subset $H_p$ of at least $3k-2$ of the half-planes then $H_p$ contains a
half-plane from every color class. This improves the best
previously known upper bound of $8k-3$. Another corollary of our
first result is a new proof of the existence of small size
$\eps$-nets for points in the plane with respect to half-planes.
\end{abstract}

\section{Introduction}

In this contribution, we are interested in coloring finite sets
of points in $\Re^2$ so that any half-plane that contains at
least some fixed number of points, also contains at least one
point from each of the color classes.

Before stating our results, we introduce the following definitions:

A {\em range space} (or {\em hypergraph}) is a pair $(V,\E)$ where
$V$ is a set (called the {\em ground set}) and $\E$ is a set of
subsets of $V$.

A {\em coloring} of a hypergraph is an assignment of colors to the
elements of the ground set. A {\em $k$-coloring} is a
function $\chi:V \rightarrow \{1,\ldots,k\}$. A hyperedge $S \in
\E$ is said to be {\em polychromatic} with respect to some
$k$-coloring $\chi$ if it contains a point from each of the $k$
color classes. That is, for every $i \in \{1,\ldots,k\}$ $S \cap
\chi^{-1}(i) \neq \emptyset$. We are interested in hypergraphs
induced by an infinite family of geometric regions. Let $\R$ be a
family of regions in $\Re^d$ (such as all balls, all
axis-parallel boxes, all half-spaces, etc.)

Consider the following two functions defined for $\R$ (notations
are taken from \cite{ACCLS09}):
\begin{enumerate}
\item
Let $f=f_{\R}(k)$ denote the minimum number such that any finite
point set $P \subset \Re^d$ can be $k$-colored so that every
range $R \in \R$ containing at least $f$ points of $P$ is
polychromatic.

\item Let $\bar{f}= \bar{f}_{\R}(k)$ denote the minimum number such that
any finite sub-family $\R' \subset \R$ can be
$k$-colored so that for every point $p \in \Re^d$, for which the
subset ${\R}'_p \subset \R'$ of regions containing $p$ is of size
at least $\bar{f}$, ${\R}'_p$ is polychromatic.
\end{enumerate}

We note that the functions $f_{\R}(k)$ and $\bar{f}_{\R}(k)$ might not be bounded even for $k=2$. Indeed, suppose $\R$ is the family of all convex
sets in the plane and $P$ is a set of more than $2f-2$ points in
convex position. Note that any subset of $P$ can be cut-off by some range in $\R$. By the pigeon-hole principle, any $2$ coloring
of $P$ contains a monochromatic subset of at least $f$ points,
thus illustrating that $f_{\R}(2)$ is not bounded in that case.
Also note that $f_{\R}(k)$ and $\bar{f}_{\R}(k)$ are monotone
non-decreasing, since any upper bound for $f_{\R}(k)$ would imply
an upper-bound for $f_{\R}(k-1)$ by merging color classes. We
sometimes abuse the notation and write $f(k)$ when the family of
ranges under consideration is clear from the context.

The functions defined above are related to the so-called {\em
cover-decomposable} problems or the decomposition of
\emph{$c$-fold coverings} in the plane. It is a major open
problem to classify, for which families $\R$ those functions are
bounded, and in those cases to provide sharp bounds on $f_{\R}(k)$
and $\bar{f}_{\R}(k)$. Pach~\cite{Pa80} conjectured that
$\bar{f}_{\tr}(2)$ exists whenever $\tr$ is a family of all
translates of some fixed compact convex set. These functions have
been the focus of many recent research papers and some special
cases are resolved. See, e.g.,
\cite{strips-latin,ACCLOR09,GV09,MP86,PTT07,PT07,Pal09,PT10,TT07}.
We refer the reader to the introduction of \cite{ACCLS09} for
more details on this and related problems.

\paragraph{Application to Battery Consumption in Sensor Networks} Let $\R$ be a
collection of sensors, each of which monitors the area within a
given shape $A$. Assume further that each sensor has a battery
life of one time unit. The goal is to monitor the region $A$ for
as long as possible. If we activate all sensors in $\R$
simultaneously, $A$ will be monitored for only one time unit.
This can be improved if $\R$ can be partitioned into $c$ pairwise
disjoint subsets, each of which covers $A$. Each subset can be
used in turn, allowing us to monitor $A$ for $c$ units of time.
Obviously if there is a point in $A$ covered by only $c$ sensors
then we cannot partition $\R$ into more than $c$ families.
Therefore, it makes sense to ask the following question: what is
the minimum number $\bar{f}(k)$ for which we know that, if every point
in $A$ is covered by $\bar{f}(k)$ sensors, then we can partition $\R$
into $k$ pairwise disjoint covering subsets? This is exactly the
type of problem that we described. For more on the relation
between these partitioning problems and sensor networks, see the
paper of Buchsbaum~\emph{et al.}~\cite{othersensors}.

\paragraph{Our results}
 For the family $\H$ of all half-planes, Pach and T\'{o}th
showed in \cite{PT07} that $f_{\H}(k)=O(k^2)$. Aloupis~\emph{et al.}~\cite{ACCLS09} showed that $\frac{4k}{3} \leq f_{\H}(k)\leq 4k-1
$. In this paper, we settle the case of half-planes by showing
that the exact value of $f_{\H}(k)$ is $2k-1$. Keszegh \cite{K07}
showed that $\bar{f}_{\H}(2) \leq 4$ and Fulek~\cite{Fulek}
showed that $\bar{f}_{\H}(2)= 3$.  Aloupis~\emph{et al.}~\cite{ACCLS09}
showed that $\bar{f}_{\H}(k) \leq 8k-3$. In this paper, we obtain the improved bound of $\bar{f}_{\H}(k) \leq 3k-2$.

\paragraph{An Application to $\eps$-Nets for Half-Planes}
Let $H=(V,\E)$ be a hypergraph where $V$ is a finite set. Let
$\epsilon \in [0,1]$ be a real number. A subset $N\subseteq V$ is
called an \textit{$\epsilon$-net} if for every hyperedge $S\in
\E$ such that $|S|\geq \epsilon |V|$, we have also $S \cap N \neq
\emptyset$. In other words, $N$ is a hitting set for all ``large"
hyperedges. Haussler and Welzl \cite{HW} proved the following
fundamental theorem regarding the existence of small $\eps$-nets
for hypergraphs with a small \VC-dimension.

\begin{theorem}[$\eps$-net theorem \cite{HW}] 
    Let $H=(V,\E)$ be a hypergraph with \VC-dimension $d$. For
    every $\eps \in (0,1]$, there
    exists an $\eps$-net $N \subset V$ with cardinality at most
    $\displaystyle O\pth{  \frac{d}{\eps}\log\frac{1}{\eps} }$
\end{theorem}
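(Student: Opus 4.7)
The plan is to construct the $\eps$-net by uniform random sampling and to control the failure probability via the Vapnik--Chervonenkis double-sampling (symmetrization) trick. Pick $m = c(d/\eps)\log(1/\eps)$ for a sufficiently large absolute constant $c$, and let $N\subseteq V$ be a uniformly random subset of size $m$. I want to show that with positive probability every ``heavy'' hyperedge $S\in\E$ (one with $|S|\geq \eps|V|$) meets $N$. A direct union bound over heavy hyperedges is hopeless: there can be as many as $\Omega(|V|^d)$ of them, which would force $m$ to depend on $|V|$.

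First I would introduce an independent second sample $N'$ of the same size $m$, and for each heavy $S$ define the event $A_S = \{\,S\cap N = \emptyset \text{ and } |S\cap N'|\geq \eps m/2\,\}$. A Chernoff-type tail estimate, valid as soon as $m = \Omega(1/\eps)$, shows that $|S\cap N'|\geq \eps m/2$ holds with probability at least $1/2$ for every fixed heavy $S$. Consequently $\Pr[\,N \text{ misses some heavy } S\,]\leq 2\,\Pr[\,\exists S\colon A_S\,]$, so it suffices to bound the right-hand side.

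Next I would condition on the multiset $T = N\cup N'$. Conditionally on $T$, the pair $(N,N')$ is a uniformly random equal-sized partition of $T$, and the event $A_S$ depends on $S$ only through the trace $T\cap S$. By the Sauer--Shelah lemma the number of distinct traces $T\cap S$, as $S$ ranges over $\E$, is at most $O(|T|^d)=O(m^d)$. For a fixed trace of size at least $\eps m/2$, the probability that the random partition places all of its elements inside $N'$ is at most $2^{-\eps m/2}$. A union bound then gives a conditional failure probability of $O(m^d)\cdot 2^{-\eps m/2}$, and the chosen $m$ drives this quantity strictly below $1$, producing the desired net.

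The main obstacle is the symmetrization step itself: rewriting the ``$N$ misses some heavy $S$'' event, which a priori ranges over an uncontrolled family of hyperedges, as one determined purely by a uniformly random partition of the merged sample $T$. Once this move is made, the polynomial Sauer--Shelah bound on the shatter function tames the exponential blow-up in the number of ranges, and the remainder is routine bookkeeping to solve $m^d\cdot 2^{-\eps m/2}<1$ and extract $m = O\!\left((d/\eps)\log(1/\eps)\right)$.
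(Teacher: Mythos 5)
This theorem is quoted from Haussler and Welzl \cite{HW} and the paper gives no proof of it, so there is nothing internal to compare against; your double-sampling argument is precisely the classical proof from that source (and from Matou\v{s}ek's textbook treatment), and its structure is sound: the Chebyshev/Chernoff step reducing ``$N$ misses a heavy range'' to the symmetrized event $A_S$, the conditioning on the merged sample $T$ so that $(N,N')$ becomes a uniform equal split, the Sauer--Shelah bound $O(|T|^d)$ on the number of traces, and the estimate $2^{-\eps m/2}$ for a fixed trace are all correct. One quantitative caveat: solving $m^d\cdot 2^{-\eps m/2}<1$ with an absolute constant $c$ actually yields $m=O\!\left(\frac{d}{\eps}\log\frac{d}{\eps}\right)$, not $O\!\left(\frac{d}{\eps}\log\frac{1}{\eps}\right)$ --- the extra $d\log d$ term in $d\log m$ is not absorbed when $d$ is large relative to $1/\eps$. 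The sharper form with $\log\frac{1}{\eps}$, which is what the paper states, is the refinement of \Komlos, Pach and Woeginger \cite{KPW}; for the paper's purposes (half-planes, where $d$ is an absolute constant) the two bounds coincide up to the implied constant, so your argument fully suffices for everything the theorem is used for here.
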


The notion of $\epsilon$-nets is central in several mathematical
fields, such as computational learning theory, computational
geometry, discrete geometry and discrepancy theory.

Most hypergraphs studied in discrete and computational geometry
have a finite \VC-dimension. Thus, by the above-mentioned
theorem, these hypergraphs admit small size $\eps$-nets. \Komlos
~\emph{et al.}~\cite{KPW} proved that the bound
$O(\frac{d}{\eps}\log\frac{1}{\eps})$ on the size of an $\eps$-net
for hypergraphs with \VC-dimension $d$ is best possible. Namely,
for a constant $d$ they construct a hypergraph $H$ with
\VC-dimension $d$ such that any $\eps$-net for $H$ must have
size of at least $\Omega(\frac{1}{\eps}\log\frac{1}{\eps})$.
However, their construction is random and seems far from being a
``nice'' geometric hypergraph. It is believed that for most
hypergraphs with \VC-dimension $d$ that arise in the geometric
context, one can improve on the bound
$O(\frac{d}{\eps}\log\frac{1}{\eps})$.

Consider a hypergraph $H=(P,E)$ where $P$ is a finite set of points
in the plane and
$$E=\{P\cap h: \mbox{h is a half-plane}\}.$$ For
this special case,  Woeginger \cite{GW} showed that for any
$\epsilon
> 0$ there exists an $\epsilon$-net for $H$ of size at most
$\frac{2}{\epsilon}-1$ (see also, \cite{PW90}).

As a corollary of our result, we obtain yet another proof for this fact.


\section{Coloring Points with Respect to Half-Planes}

Let $\H$ denote the family of all half-planes in $\Re^2$. In this
section we prove our main result by finding the exact value of
$f_{\H}(k)$, for the family $\H$ of all half-planes.

\begin{theorem} \label{thm-main}
$f_{\H}(k) = 2k-1$.
\end{theorem}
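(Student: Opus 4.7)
The plan is to establish the two inequalities separately. The lower bound is a short pigeonhole argument, while the upper bound proceeds by induction on $k$.

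For the lower bound, take $P$ to be $2k-1$ points in convex position, for example on the unit circle. In any $k$-coloring of $P$ the pigeonhole principle forces some color $c$ to appear on exactly one point $p\in P$. Because $P$ is in convex position, there is an open half-plane $h$ with $h\cap P=P\setminus\{p\}$; this $h$ contains $2k-2$ points of $P$ but misses color $c$, so $h$ is not polychromatic. Hence $f_{\H}(k)\ge 2k-1$.

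For the upper bound, the plan is induction on $k$, with the trivial base $k=1$ given by assigning color $1$ to every point. Assuming $f_{\H}(k-1)\le 2k-3$, the goal is to construct a subset $A\subseteq P$ such that
\begin{enumerate}
\item[(i)] every half-plane $h$ with $|h\cap P|\ge 2k-1$ satisfies $|h\cap A|\ge 1$ (covering), and
\item[(ii)] every such $h$ satisfies $|h\cap A|\le 2$ (sparsity).
\end{enumerate}
Given such an $A$, color it entirely with the new color $k$ and invoke the induction hypothesis on $P\setminus A$ to obtain a $(k-1)$-coloring under which every half-plane containing at least $2k-3$ points of $P\setminus A$ is polychromatic. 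For any $(2k-1)$-heavy $h$, property (ii) gives $|h\cap(P\setminus A)|\ge 2k-1-2=2k-3$, so $h$ meets every color in $\{1,\dots,k-1\}$, while (i) supplies color $k$; hence $h$ is $k$-polychromatic, closing the induction.

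The main obstacle is the construction of $A$. The intended approach is to split the half-plane family into ``upward'' ones (lying above their bounding line) and ``downward'' ones, and to treat each type along the corresponding convex envelope of $P$, which orders the relevant extreme points linearly. Along such a chain an upward half-plane carves out a contiguous arc of points, so a parity-style marking down the chain naturally yields one or two marked points on every sufficiently long arc. The delicate part is relating ``long arc on the upper chain'' to ``$(2k-1)$-heavy half-plane of $P$'' once interior points of $P$ are also in play: I expect this will need a peeling argument through successive convex layers, together with an amortized count that guarantees that the upper- and lower-chain selections, when merged into a single $A$, still satisfy the global sparsity bound of two.
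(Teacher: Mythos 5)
Your lower bound is essentially the paper's argument (points in convex position, pigeonhole, isolate the rare color), modulo the small slip that the rare color may appear \emph{zero} times rather than exactly once --- that case is handled by any half-plane cutting off $2k-2$ of the points, so it is harmless. Your upper-bound skeleton (find a set $A$ that hits every heavy half-plane at least once and at most twice, give it the new color, recurse) is also exactly the paper's strategy. The problems are in the two places where the skeleton has to be made precise.

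First, properties (i) and (ii) as you state them are jointly unsatisfiable: a half-plane containing \emph{all} of $P$ has $|h\cap P|\ge 2k-1$, so your (ii) would force $|A|\le 2$, while (i) forces $A$ to hit every half-plane containing exactly $2k-1$ points --- already for $k=2$ and many points in convex position this needs roughly $|P|/3$ points. The correct formulation restricts the sparsity bound to half-planes with $|h\cap P|$ \emph{exactly} $2k-1$; a general heavy half-plane $h$ is then first shrunk (by translating its bounding line) to $h'$ with $|h'\cap P|=2k-1$, and one argues about $h'$. Second, and more seriously, the existence of $A$ is the entire content of the theorem and you do not construct it. Your sketch --- parity marking along upper and lower convex chains, then ``peeling through successive convex layers'' with an ``amortized count'' --- is precisely where the difficulty sits, and you acknowledge you do not know how to carry it out; it is not clear that contiguity of arcs survives the presence of interior points, nor that the merged upper- and lower-chain selections stay $2$-sparse. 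The paper's resolution is quite different and much cleaner: take $N$ to be a \emph{containment-minimal} hitting set, chosen among the vertices of the convex hull of $P$, for the hypergraph of half-planes containing exactly $2k-1$ points. Minimality alone then forces $|N\cap E|\le 2$ for every such hyperedge: if some $E$ contained three points $p,r,u$ of $N$ in convex-position order, the half-plane witnessing that the middle point $r$ is indispensable would have to cross the chord structure of the hull in a way that makes the bounding line of $E$ meet $\partial CH(P)$ three times, a contradiction. Without this lemma (or a worked-out substitute), your proof of the upper bound is incomplete.
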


We start by proving the lower bound $f_{\H}(k)\geq 2k-1$. Our lower bound construction is simple, and is inspired by a lower bound construction for $\eps$-nets with respect to half-planes given in \cite{GW}. We need
to show that there exists a finite set $P$ in $\Re^2$ such that
for every $k$-coloring of $P$ there is a half-plane that contains
$2k-2$ points, and is not polychromatic. In fact, we show a
stronger construction. For every $n \geq 2k-1$ there is such a
set $P$ with $|P|=n$. We construct $P$ as follows: We place
$2k-1$ points on a concave curve $\gamma$ (e.g., the parabola
$y=x^2$, $-1<x<1$). Let $p_1,p_2,..,p_{2k-1}$ be the points
ordered from left to right along their $x$-coordinates. Notice
that for every point $p_i$ on $\gamma$ there is an open positive
half-plane $h_i$ that does not contain $p_i$ and contains the
rest of the $2k-2$ points that are on $\gamma$. Namely, $h_i \cap
\{p_1,...,p_{2k-1}\} = \{p_1,...p_{i-1},p_{i+1},...,p_{2k-1}\}$.
We choose $h_1,h_2,...,h_{2k-1}$ in such a way that
$\cap_{i=1}^{2k-1}\overline{h_i}\neq \emptyset$ where
$\overline{h_i}$ is the complement of $h_i$. We place $n-(2k-1)$
points in $\cap_{i=1}^{2k-1}\overline{h_i}$. Let
$\chi:P\rightarrow \{1,...,k\}$ be some $k$-coloring of $P$.
There exists a color $c$ that appears at most once among the
points on $\gamma$ (for otherwise we would have at least $2k$
points). If no point on $\gamma$ is colored with $c$ then a
(positive) half-plane bounded by a line separating the parabola
from the rest of the points is not polychromatic. Let $p_j$ be
the point colored with $c$. As mentioned, the open half-plane
$h_j$ contains all the other points on $\gamma$ (and only them),
so $h_j$ contains $2k-2$ points and misses the color $c$. Hence,
it is not polychromatic. Thus $f_{\H}(k)
> 2k-2$ and this completes the lower bound construction. See
Figure \ref{fig-1} for an illustration.

\begin{figure}
    \centering
    \includegraphics[width=1in]{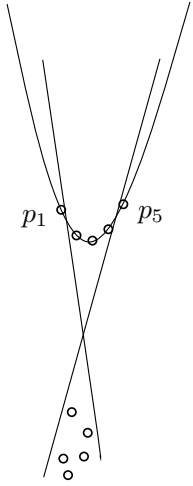}
    \caption{\label{fig-1}A construction showing that $f(k)>2k-2$ for $n=10$ and $k=3$}
\end{figure}

Next, we prove the upper-bound $f_{\H}(k) \leq 2k-1$. In what
follows, we assume without loss of generality that the set of
points $P$ under consideration is in general position, namely,
that no three points of $P$ lie on a common line. Indeed, we can
slightly perturb the point set to obtain a set $P'$ of points in
general position. The perturbation is done in such a way that for
any subset of the points of the form $h \cap P$ where $h$ is a
half-plane, there is another half-plane $h'$ such that $h \cap P
= h'\cap P'$. Thus any valid polychromatic $k$-coloring for $P'$
also serves as a valid polychromatic $k$-coloring for $P$.

For the proof of the upper bound we need the following lemma:

\begin{lemma}
Let $P$ be a finite point set in the plane in general position and
let $t\geq 3$ be some fixed integer. Let $H'=(P,\cal {E'})$ be a
hypergraph where ${\cal E'}=\{ P \cap h: h \in {\H}, |P\cap h|=t
\}$. Let $P'\subseteq P$ be the set of extreme points of $P$
(i.e., the subset of points in $P$ that lie on the boundary of the
convex-hull $CH(P)$ of $P$). Let $N\subseteq P'$ be a
(containment) minimal hitting set for $H'$. Then for every $E\in
\cal {E'}$ we have $|N\cap E|\leq 2$. \label{lemma1}
\end{lemma}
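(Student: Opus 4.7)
The plan is to suppose for contradiction that some edge $E = P \cap h \in \mathcal{E}'$ satisfies $|N \cap E| \geq 3$, and then to exhibit a redundant point of $N$, contradicting minimality. Since $N \subseteq P'$, the set $N \cap h$ lies on the convex hull and occupies a contiguous portion of the CH-arc $h \cap \partial CH(P)$. Linearly ordering $N \cap h$ along this arc, pick three consecutive points $p_1, p_2, p_3$ with $p_2$ in the middle, and let $A_{123}$ denote the sub-arc of $\partial CH(P)$ from $p_1$ to $p_3$ passing through $p_2$. By construction $A_{123} \subseteq h \cap CH(P)$ and $N \cap A_{123} = \{p_1, p_2, p_3\}$.

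Next I invoke minimality: if $p_2$ cannot be dropped from $N$, there must exist an edge $E' = P \cap h' \in \mathcal{E}'$ with $E' \cap N = \{p_2\}$. In particular $p_1, p_3 \notin h'$, so the bounding line $\ell' = \partial h'$ separates $p_2$ from $\{p_1, p_3\}$ and crosses $\partial CH(P)$ at two points $a, b$, one on each of the two sub-arcs of $A_{123}$ flanking $p_2$. Hence $a, b \in A_{123} \subseteq h \cap CH(P)$.

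The decisive geometric step is that $h \cap CH(P)$ is convex, so the chord $\overline{ab}$ lies inside $h \cap CH(P)$. Consequently the cap that $\ell'$ cuts off on the $p_2$ side (which equals $h' \cap CH(P)$) is itself contained in $h \cap CH(P)$, and since $P \subseteq CH(P)$ this yields $E' \subseteq E$. But $|E'| = |E| = t$ forces $E' = E$, contradicting $p_1 \in E \setminus E'$. The main obstacle is verifying that the two points $a, b$ of $\ell' \cap \partial CH(P)$ land on the two sub-arcs of $A_{123}$ flanking $p_2$; this uses the consecutivity of $p_1, p_2, p_3$ within $N \cap h$ together with the separation property of $\ell'$, and once it is in hand the rest reduces to a one-line convexity argument.
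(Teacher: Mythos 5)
Your proof is correct, and while it follows the paper's overall strategy---take three points of $N$ in $E=P\cap h$ ordered along $\partial CH(P)$, use minimality of $N$ to produce a hyperedge $E'=P\cap h'$ with $N\cap E'=\{p_2\}$ for the middle point, and exploit $|E'|=|E|=t$---the decisive geometric step is genuinely different. The paper argues via the bounding lines: it first shows that $\partial h'$ cannot cross the chord $\partial h\cap CH(P)$ (a segment-crossing argument using the two outer points), and then takes a point $r'\in E'\setminus E$, which exists because $E'\neq E$ and both have size $t$, to force $\partial h$ to meet $\partial CH(P)$ in three points. You instead prove the containment $h'\cap CH(P)\subseteq h\cap CH(P)$ directly: the arc $h'\cap\partial CH(P)$ lies inside $A_{123}\subseteq h$ because $\ell'$ must cross each of the two sub-arcs flanking $p_2$, and convexity of $h\cap CH(P)$ then absorbs the chord $\overline{ab}$ and hence the whole cap; this gives $E'\subseteq E$, so $E'=E$ by cardinality, contradicting $p_1\in E\setminus E'$. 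Your version trades the paper's case analysis on where the two bounding lines intersect for a single convexity argument, which is cleaner. The one thing the paper does that you should add is to dispose, up front, of the degenerate situation where $\partial h$ misses $CH(P)$, so that $h\cap\partial CH(P)$ is the whole closed boundary curve and your ``linear order along the arc'' is only a cyclic order: there $E=P$, $|P|=t$, $\mathcal{E}'=\{P\}$, and a minimal hitting set is a single point, so the lemma is immediate in that case.
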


\begin {proof}

First notice that such a hitting set $N \subset P'$ for $H'$
indeed exists since $P'$ is a hitting set.

Assume to the contrary that there exists a hyperedge $E\in \cal
{E'}$ such that $|N\cap E|\geq 3$. Let $h$ be a half-plane such
that $h\cap P=E$ and let $l$ be the line bounding $h$. Assume,
without loss of generality, that $l$ is parallel to the $x$-axis
and that the points of $E$ are below $l$. If $l$ does not
intersect the convex hull $CH(P)$ or is tangent to $CH(P)$ then
$h$ contains $P$ and $|P|=t$. Thus any minimal hitting set $N$
contains exactly one point of $P$, a contradiction. Hence, the
line $l$ must intersect the boundary of $CH(P)$ in two points.

Let $q,q'$ be the left and right points of  $l\cap
\partial CH(P)$ respectively.
Let $p,r,u$ be three points in $N\cap E$ ordered according to
their counter-clockwise order on $\partial CH(P)$. By the
minimality property, there is a half-plane $h_r$ such that $h_r
\cap P \in {\cal E}' $ and such that $N\cap h_r=\{r\}$, for
otherwise, $N\setminus\{ r \}$ is also a hitting-set for $\cal
{E'}$ contradicting the minimality of $N$. See Figure
\ref{lemma_fig} for an illustration.

Denote the line bounding $h_r$ by $l_r$ and denote by $\bar{h}_r$ the complement half-plane of $h_r$. Notice that $l_r$ can
not intersect the line $l$ in the interior of the segment $qq'$.
Indeed assume to the contrary that $l_r$ intersects the segment
$qq'$ in some point $x$. Then, by convexity, the open segment $rx$
lies in $h_r$. However, the segment $rx$ must intersect the segment $pu$. This is impossible since both $p$ and $u$ lie in $\bar{h}_r$ and therefore, by convexity also the segment $pu$ lies in $\bar{h}_r$. Thus the segment $pu$
and the segment $rx$ are disjoint.

Next, suppose without loss of generality that the line $l_r$
intersects $l$ to the right of the segment $qq'$. Let $q''$
denote the point $l \cap l_r$. We have that $|h_r\cap P|=t$ and
also $|E|=|h\cap P| = t$, therefore there is at least one point $r'$ that
is contained in $h_r \cap P$ and is not contained in $h$, hence
it lies above the line $l$. The segment $rr'$ must intersect the
line $l$ to the right of the point $q''$. Also, by convexity, the
segment $rr'$ is contained in $CH(P)$. This implies that the line
$l$ must intersect $\partial CH(P)$ to the right of $q'$, i.e
intersects $\partial CH(P)$ in three points, a contradiction.

\end {proof}

\begin{figure}
    \centering
    \includegraphics[width=0.7\textwidth]{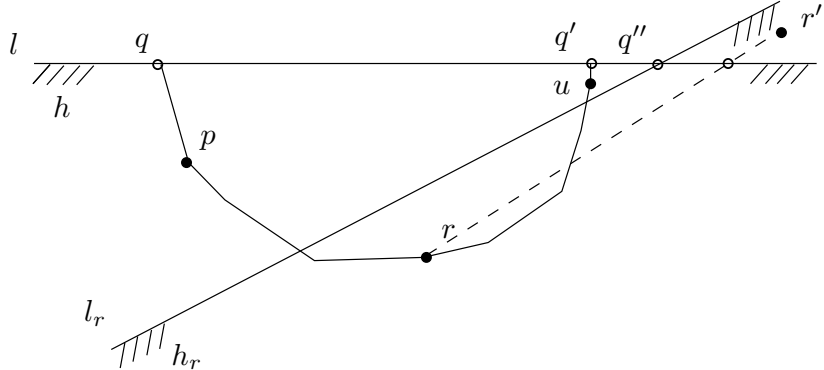}
    \caption{ \label{lemma_fig}
The line $l$ intersects the boundary of $CH(P)$ in two points.}
\end{figure}

We are ready to prove the second part of Theorem~\ref{thm-main}:
Recall that for a given finite planar set $P\subset \Re^2$ and an
integer $k$, we need to show that there is a $k$-coloring for $P$
such that every half-plane that contains at least $2k-1$ points
is polychromatic.

For $k=1$ the theorem is obvious. For $k=2$, put $t=3$ and let $N$
be a hitting set as in lemma \ref{lemma1}. We assign the points
of $N$ the color $2$ and assign the points of $P\setminus N$ the
color $1$. Let $h$ be a half-plane such that $|h\cap P|\geq 3$.
Assume without loss of generality that $h$ is a negative
half-plane. Let $l$ denote the line bounding $h$. Translate $l$
downwards to obtain a line $l'$, such that for the negative
half-plane $h'$ bounded by $l'$, we have $h'\cap P\subseteq h\cap
P$ and $|h'\cap P|=3$. We can assume without loss of generality
that no line parallel to $l$ passes through two points of $P$.
Indeed, this can be achieved by rotating $l$ slightly. Obviously
$h' \cap N \neq \emptyset$. Moreover, by lemma \ref{lemma1} we
have that $h'\cap (P\setminus N) \neq \emptyset$. Hence, $h'$
contains both a point colored with $1$ and a point colored with
$2$, i.e., $h'$ is polychromatic. Thus $h$ is also polychromatic.

We prove the theorem by induction on the number of colors $k$.
The induction hypothesis is that the theorem holds for all values
$i < k$. Let $k > 2$ be an integer. Put $t=2k-1$ and let $N$ be a
minimal hitting set as in Lemma~\ref{lemma1}. We assign all
points in $N$ the color $k$. Put $P' = P \setminus N$. By the
induction hypothesis, we can color the points of $P'$ with $k-1$
colors, such that for every half-plane $h$ with $|h \cap P'| \geq
2k-3$, $h$ is polychromatic, i.e., $h$ contains representative
points from all the $k-1$ color classes. We claim that this
coloring together with the color class $N$ forms a valid
$k$-coloring for $P$. Consider a half-plane $h$ such that $|h\cap
P|\geq 2k-1$. As before, let $h'$ be a half-plane such that
$h'\cap P\subseteq h\cap P$ and $|h'\cap P|=2k-1$. It is enough
to show that $h'$ is polychromatic. By lemma \ref{lemma1} we know
that $1\leq|h'\cap N|\leq 2$, therefore we can find a half-plane
$h''$ such that $h''\cap P\subseteq h'\cap P$ and $|h''\cap
(P\setminus N)|=2k-3$. By the induction hypothesis, $h''$ contain
representative points from all the initial $k-1$ colors. Thus $h'$
contain a point from $N$ (i.e., colored with $k$) and a point
from each of the initial $k-1$ colors. Hence $h'$ is polychromatic
and so is $h$. This completes the proof of the theorem.

{\bf \noindent Remark:} The above theorem also provides a
recursive algorithm to obtain a valid $k$-coloring for a given
finite set $P$ of points. See Algorithm \ref{alg:mine}. Here, we do not care about the running time of the algorithm. Assume that we have a ``black-box'' that
finds a hitting set $N$ as in lemma~\ref{lemma1} in time bounded
by some function $f(n,t)$.

\begin{algorithm}

\caption{Algorithm for polychromatic $k$-coloring}

\SetLine \KwIn{A finite set $P\subset \Re^2$ and an integer
$k\geq1$} \KwOut{A polychromatic $k$-coloring $\chi:P\rightarrow
\{1,...,k\}$}
\Begin{
 \If {k=1} {Color all points of $P$ with
color 1.} \Else
{Find a minimal hitting set $N$ as in lemma \ref{lemma1} for all the half-planes of size $2k-1$.\\
Color the points in $N$ with color $k$.\\
 Set $P=P\setminus N$ and $k=k-1$. Recursively color $P$ with $k$ colors. }

\label{alg:mine}
}
\end{algorithm}

Note that a trivial bound on the total running time of the
algorithm is $\sum_{i=1}^k f(n,2i-1)$.


\section{Coloring Half-Planes with Respect to Points}
Keszegh \cite{K07} investigated the value of $\bar{f}_{\H}(2)$ and
proved that $\bar{f}_{\H}(2)\leq 4$. Recently Fulek \cite{Fulek}
showed that in fact $\bar{f}_{\H}(2) = 3$. For the general case,
Aloupis~\emph{et al.} proved in \cite{ACCLS09} that $\bar{f}_{\H}(k)\leq
8k-3$. We obtain an improved bound of $\bar{f}_{\H}(k)\leq 3k-2$. Before proving this bound, let us show a simple proof of the weaker bound $\bar{f}_{\H}(k)\leq 4k-3$. 

\begin{theorem}
\label{dual}
$\bar{f}_{\H}(k) \leq 4k-3$
\end{theorem}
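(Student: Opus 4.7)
The plan is to reduce the dual statement to the primal statement (Theorem~\ref{thm-main}) via point-line duality, after first splitting the family of half-planes into two sub-families according to orientation.

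First, I would partition the given finite sub-family $\R' \subset \H$ into the sub-family $\R'_{\uparrow}$ of ``upper'' half-planes (those of the form $y \geq cx+d$) and the sub-family $\R'_{\downarrow}$ of ``lower'' half-planes. After a generic infinitesimal rotation of the plane we may assume no bounding line is vertical, so this partition is well-defined. The goal is to produce a $k$-coloring of each of $\R'_{\uparrow}$ and $\R'_{\downarrow}$ separately, using the same set of $k$ colors, such that any point covered by at least $2k-1$ half-planes of a single orientation class receives all $k$ colors from that class alone. Combining the two colorings then yields a $k$-coloring of $\R'$. Since $(2k-2)+(2k-2) = 4k-4 < 4k-3$, any point $p$ covered by at least $4k-3$ members of $\R'$ is covered by at least $2k-1$ half-planes of one orientation, hence is polychromatic; this gives $\bar{f}_{\H}(k)\leq 4k-3$.

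The substantive step is to produce the $k$-coloring of $\R'_{\uparrow}$ with the stated property, and this is where I would invoke the standard point-line duality $(a,b)\leftrightarrow \{y=ax-b\}$. Under this duality, the above/below relation is preserved, so an upper half-plane $h$ (bounded by a line $L$) becomes a dual point $h^*=L^*$, and a query point $p$ becomes a dual line $p^*$ together with the upper half-plane it bounds. The incidence $p\in h$ in the primal translates precisely to ``$h^*$ lies in the upper half-plane determined by $p^*$'' in the dual. Consequently, the dual of the set $\R'_{\uparrow}$ is a finite point set $Q_{\uparrow}\subset \Re^2$, and ``$p$ is covered by at least $2k-1$ upper half-planes from $\R'_{\uparrow}$'' becomes ``the upper half-plane bounded by $p^*$ contains at least $2k-1$ points of $Q_{\uparrow}$''. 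Applying Theorem~\ref{thm-main} to $Q_{\uparrow}$ gives a $k$-coloring under which every half-plane (in particular every upper half-plane) containing $\geq 2k-1$ points of $Q_{\uparrow}$ is polychromatic; pulling this coloring back to $\R'_{\uparrow}$ achieves what we want. Repeating the argument for $\R'_{\downarrow}$ (using lower half-planes throughout) finishes the construction.

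I do not expect any serious obstacles here. The only technicalities are the standard ones: ensuring general position so that no half-plane has a vertical bounding line (handled by a slight rotation as above), verifying that the duality respects the ``above'' relation in the orientation we want, and checking the pigeonhole arithmetic $(2k-2)+(2k-2)<4k-3$ which gives the $4k-3$ threshold. The proof is essentially a clean dualization of Theorem~\ref{thm-main} combined with a two-class pigeonhole.
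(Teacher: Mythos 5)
Your proposal is correct and follows essentially the same route as the paper: partition the half-planes into the two orientation classes, dualize each class's bounding lines to a point set, apply Theorem~\ref{thm-main} to each with $k$ colors, and finish with the pigeonhole count $(2k-2)+(2k-2)=4k-4<4k-3$. The only quibble is the direction of the above/below relation after dualizing (the dual points land \emph{below} the line $p^*$ in the paper's convention), but since Theorem~\ref{thm-main} covers half-planes of both orientations this is immaterial.
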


Theorem~\ref{dual} is a direct corollary of
Theorem~\ref{thm-main} and uses a reduction to coloring points in the plane. This reduction was also used in \cite{ACCLS09}.  

\begin{proof}
Let $H\subseteq {\H}$ be a finite set of half-planes. We partition
$H$ into two disjoint sets $H^+$ and $H^-$ where $H^+ \subset H$
(respectively $H^- \subset H$) is the set of all positive
half-planes (respectively negative half-planes). It is no loss of
generality to assume that all lines bounding the half-planes in
$H$ are distinct. Indeed, by a slight perturbation of the lines,
one can only obtain a superset of hyperedges in the corresponding
hypergraph (i.e, a superset of cells in the arrangement of the
bounding lines). Let $L^+$ (respectively $L^-$) be the sets of
lines bounding the half-planes in $H^+$ (respectively $H^-$).
Next, we use a standard (incidence-preserving) dualization to
transform the set of lines $L^+$ (respectively $L^-$) to a set of
points ${L^+}^*$ (respectively ${L^-}^*$). It has the property
that a point $p$ is above (respectively incident or below) a line
$l$ if and only if the dual line $p^*$ is above (respectively
incident or below) the point $l^*$. See Figure \ref{red-blue} for
an illustration.

\begin{figure}
    \centering
    \includegraphics[width=0.45\textwidth]{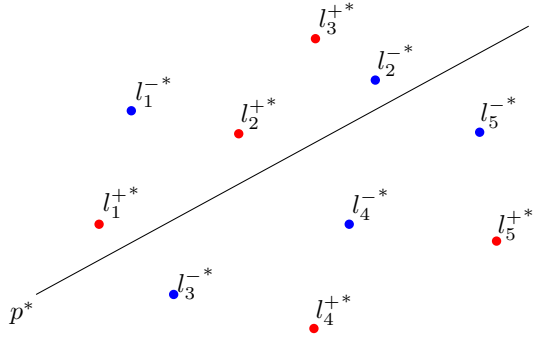}
    \caption{   \label{red-blue}
An illustration of the dualization. In the primal, the point $p$
is contained in the half-planes
    bounded by the lines $l_1^-,l_2^-,l_4^+$ and $l_5^+$}
\end{figure}

We then color the sets ${L^+}^*$ and ${L^-}^*$
independently. We color each of them with $k$-colors so that
every half-plane containing $2k-1$ points of a given set, is also
polychromatic. Obviously, by Theorem~\ref{thm-main}, such a
coloring can be found. This coloring induces the final coloring
for the set $H = H^+ \cup H^-$. To prove that this coloring is
indeed valid, consider a point $p$ in the plane. Let $H'\subseteq
H$ be the set of half-planes containing $p$. We claim that if
$|H'|\geq 4k-3$ then $H'$ is polychromatic. Indeed, if $|H'|\geq
4k-3$ then, by the pigeon-hole principle, either $|H'\cap
H^+|\geq 2k-1$ or $|H'\cap H^-|\geq 2k-1$. Suppose without loss
of generality that $|H'\cap H^+|\geq 2k-1$. Let
$L^+_{H'}\subseteq L^+$ be the set of lines bounding the
half-planes in $H'\cap H^+$. In the dual, the points in
${L^+_{H'}}^*$ are in the half-plane below the line $p^*$. Since
$|{L^+_{H'}}^*|\geq 2k-1$, we also have that ${L^+_{H'}}^*$ is
polychromatic, thus the set of half-planes $H'\cap H^+$ is
polychromatic and so is the set $H'$. This completes the proof of
the theorem.
\end {proof}

As promised, we further improve the bound on $\bar{f}_{\H}(k)$.

\begin{theorem}
\label{dual-better}
$\bar{f}_{\H}(k) \leq 3k-2$
\end{theorem}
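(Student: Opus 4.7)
The plan is to use the dualization from the proof of Theorem~\ref{dual} to reduce the statement to a coloring question about a single bichromatic planar point set. Set $B=(L^+)^*$, $R=(L^-)^*$, and $V=B\cup R$. For each primal point $p$, the set $H_p$ corresponds under the duality to the ``bi-hyperedge'' $E(p^*)=\{v\in B: v\text{ below }p^*\}\cup\{v\in R: v\text{ above }p^*\}$, and $|E(p^*)|=|H_p|$. It therefore suffices to produce a $k$-coloring of $V$ such that every bi-hyperedge $E(\ell)$ with $|E(\ell)|\ge 3k-2$ is polychromatic; pulling this coloring back through the duality yields the desired coloring of $H^+\cup H^-$.

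I would prove the claim about $V$ by induction on $k$, mirroring the strategy used for Theorem~\ref{thm-main}. The base case $k=1$ is trivial. For the inductive step, let $W\subseteq V$ be the union of the vertices of the lower convex hull of $B$ and of the upper convex hull of $R$; these are precisely the points that can appear on the boundary of an arbitrarily small bi-hyperedge, and in particular $W$ hits every non-empty bi-hyperedge. Let $N\subseteq W$ be a containment-minimal hitting set for the family $\{E(\ell):|E(\ell)|=3k-2\}$. Assign color $k$ to every element of $N$, and by the inductive hypothesis color $V\setminus N$ with $k-1$ colors so that every bi-hyperedge of the pair $(B\setminus N,R\setminus N)$ of size at least $3(k-1)-2=3k-5$ is polychromatic in those $k-1$ colors.

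The key step is the following analog of Lemma~\ref{lemma1} in the bichromatic setting: for every bi-hyperedge $E(\ell)$ with $|E(\ell)|\ge 3k-2$, $|N\cap E(\ell)|\le 3$. Granting this, $N\cap E(\ell)\ne\emptyset$ provides color $k$ while $|E(\ell)\setminus N|\ge 3k-5$ provides colors $1,\dots,k-1$ by induction, so $E(\ell)$ is polychromatic and the induction closes. The proof of the bound splits by the types of points in $N\cap E(\ell)$. When $N\cap E(\ell)\subseteq B$, the private bi-hyperedge witnessing the essentiality of each blue $v\in N\cap E(\ell)$ restricts on the blue side to a half-plane of $B$ isolating $v$ from the other blue $N$-points, placing us in the setting of Lemma~\ref{lemma1} with $P:=B$ and yielding the bound (in fact at most two). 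The purely red case is symmetric. The main obstacle is the mixed case, where $N\cap E(\ell)$ contains both blue and red points: a blue vertex's private bi-hyperedge constrains $CH(B)$ while a red vertex's constrains $CH(R)$, and these two hulls are coupled only through the shared line $\ell$ and the common size $3k-2$. The technical heart of the proof is to show that the cyclic orders of the relevant points on the two hulls, together with how their isolating lines cross $\ell$, rule out configurations with four or more $N$-points in $E(\ell)$; this two-hull refinement of the convexity argument used in Lemma~\ref{lemma1} is where I expect the main difficulty to lie.
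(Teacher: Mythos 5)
Your reduction to the bichromatic point set $B\cup R$ is sound as far as it goes (it is the same reduction as in Theorem~\ref{dual}, and it is why that theorem only gives $4k-3$), but the proof has a genuine gap exactly where you say you expect the difficulty to lie: the bichromatic analog of Lemma~\ref{lemma1} with the bound $|N\cap E(\ell)|\le 3$ is asserted, not proved, and it is the entire content of the argument. Worse, there is real reason to doubt it holds with the constant $3$. The bound $2$ in Lemma~\ref{lemma1} comes from the line $\ell$ meeting $\partial CH(P)$ in two points; with two hulls, $\ell$ meets $\partial CH(B)$ in two points and $\partial CH(R)$ in two points, and since you concede each pure case can realize two essential points, a mixed configuration with two essential blue and two essential red points (four in total) is not obviously excluded --- their private witnesses constrain different hulls and are coupled only weakly. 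With a bound of $4$ the induction gives $|E(\ell)\setminus N|\ge 3k-6<3(k-1)-2$ and does not close; so your whole approach stands or falls on ruling out the four-point configuration, which you have not done. A second, independent gap: you take $N$ to hit only the bi-hyperedges of size exactly $3k-2$, but you then use $N\cap E(\ell)\neq\emptyset$ for all $|E(\ell)|\ge 3k-2$. In the monochromatic setting this follows by translating the bounding line to shrink the hyperedge to size exactly $t$; for a bi-hyperedge this fails, because translating $p^*$ in either direction shrinks one of the two parts while growing the other, so a large bi-hyperedge need not contain one of size exactly $3k-2$. The same non-monotonicity undermines the ``private witness of size exactly $3k-2$'' step in your minimality argument.

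The paper avoids the two-hull interaction entirely, and you may want to compare. It first peels off covering triples: as long as the current family covers $\Re^2$, Proposition~\ref{fact-halfplanes} (a Radon/Helly-type argument) extracts three half-planes that already cover the plane, and each such triple is given its own color --- such a color class trivially meets every point. Once the remaining family $H'$ fails to cover $\Re^2$, there is a point $q$ outside all of $H'$, and the \emph{polar} duality centered at $q$ sends all of $H'$ (positive and negative half-planes alike) to a single point set in which ``$p$ lies in $h$'' becomes ``the dual point of $h$'s boundary line lies in a fixed half-plane determined by $p$'' (Proposition~\ref{fact-polar-duality}). Theorem~\ref{thm-main} then applies directly with threshold $2(k-i)-1$, and the arithmetic $3k-2-3i\ge 2(k-i)-1$ closes the argument with no new structural lemma. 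If you want to salvage your route, you would need to either prove the three-point bound in the mixed case or find a different normalization of bi-hyperedges; the polar trick is the standard way around both obstacles.
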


In the proof of Theorem \ref{dual-better} we use polar point-line duality (which was also used in \cite{Fulek}). However, this duality applies only if there exists a point $p\in \Re^2$ that is not covered by any half-plane in $H$. If this is not the case, we use the following known fact.

\begin{proposition}
\label{fact-halfplanes}
Let $H$ be a set of half-spaces in $\Re^d$ ($|H|\geq d+1$) which covers $\Re^d$. Then there is a subset $H' \subset H$ such that $|H'|=d+1$ and $H'$ covers $\Re^d$.
\end{proposition}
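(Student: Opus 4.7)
The plan is to apply Helly's theorem to the complements of the half-spaces. For each $h \in H$, let $K_h = \Re^d \setminus h$; this is a half-space (open if $h$ is closed and vice versa), and in particular a convex subset of $\Re^d$. Saying that $H$ covers $\Re^d$ is equivalent to saying that $\bigcap_{h \in H} K_h = \emptyset$.

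Now I would invoke the contrapositive of Helly's theorem in the following finite form: if $K_1,\ldots,K_n$ are convex sets in $\Re^d$ and $\bigcap_{i=1}^n K_i = \emptyset$, then there exist $d+1$ indices $i_1,\ldots,i_{d+1}$ with $\bigcap_{j=1}^{d+1} K_{i_j} = \emptyset$. Applying this to our family $\{K_h : h \in H\}$ yields a subfamily $H' \subseteq H$ of size $d+1$ with $\bigcap_{h \in H'} K_h = \emptyset$, i.e., $H'$ already covers $\Re^d$. If $|H| = d+1$ we simply take $H' = H$; otherwise $H'$ is a proper subset.

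The only subtlety worth mentioning is that the half-spaces may be taken as either open or closed (and their complements flip accordingly), but since Helly's theorem applies to any finite collection of convex sets in $\Re^d$ with no closedness or boundedness assumption, this causes no difficulty. There is no real ``main obstacle'' here: the statement is essentially Helly's theorem dualized from intersection to union, and the argument is a one-line reduction once the correct formulation of Helly is recalled.
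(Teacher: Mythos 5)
Your proof is correct, and it takes a genuinely different route from the paper's. You reduce the statement to Helly's theorem in one step: pass to the complements $K_h = \Re^d\setminus h$, observe that covering $\Re^d$ is the same as $\bigcap_h K_h = \emptyset$, and invoke the contrapositive of the finite Helly theorem (which, as you correctly note, needs no closedness or boundedness hypotheses for a finite family of convex sets) to extract $d+1$ complements with empty intersection. The paper instead gives a self-contained argument: it takes a containment-minimal covering subfamily $H'=\{h_1,\dots,h_n\}$ with $n\geq d+2$, uses minimality to produce witness points $p_i \in h_i \setminus \bigcup_{j\neq i} h_j$, applies Radon's lemma to partition these points into $P_1, P_2$ with intersecting convex hulls, and derives a contradiction from the fact that any half-space containing a point of $CH(P_1)\cap CH(P_2)$ must meet both $P_1$ and $P_2$. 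Since the standard proof of Helly's theorem itself goes through Radon's lemma, the two arguments are cousins at bottom; the difference is that you treat Helly as a black box while the paper re-derives the needed instance from Radon directly. Your version is shorter and arguably cleaner; the paper's is self-contained modulo Radon and makes the geometric mechanism (the witness points in convex position) explicit. Both are complete proofs.
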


\begin {proof}
Assume to the contrary that any subset of $H$ which covers $\Re^d$ is of size at least $d+2$. Let $H'=\{h_1,...,h_n\}\subset H$ $(n\geq d+2)$ be a (containment) minimal subset which covers $\Re^d$, i.e., a subset of half-spaces such that for any half-space $h\in H'$, the set $H'\setminus \{h\}$ does not cover $\Re^d$. Since $H'$ is minimal, there exists a point set $P=\{p_1,...,p_n\}$ such that every $p_i$ is contained in a half-space $h_i\in H'$ and is not contained in any other half-space in $H'$, i.e., $p_i\in h_i\setminus \bigcup _{j\neq i}h_j$,  for every $1\leq i\leq n$. It easy to see that the set $P$ is in convex position. Since $|P|\geq d+2$ by Radon's lemma (see, e.g., \cite{matousek}), there exists a partition $P=P_1\cup P_2$ such that $P_1\cap P_2=\emptyset$ and $CH(P_1)\cap CH(P_2)\neq \emptyset$. Let $r\in CH(P_1)\cap CH(P_2)$. Notice that, since $P$ is in convex position, $r\notin P$. Also note that for every half-space $h$ containing $r$ we have $h\cap P_1\neq \emptyset$ and $h\cap P_2\neq \emptyset$, and since $r\notin P$, $|h\cap P|\geq 2$. We assumed that $H'$ covers $\Re^2$. Let $h_i\in H'$ be a half-space that contains $r$. We have  $|h_i\cap P|\geq 2$, a contradiction to the fact that $h_i\cap P=\{p_i\}$.
\end {proof}

As mentioned, we need the notion of point-line polar duality. A point $p=(a,b)$ (such that $(a,b)\neq (0,0)$) is transformed to a line $p^*$ with parameterization $ax+by=1$, and a line $l:ax+by=1$ is transformed to a point $l^*=(a,b)$. For the proof of Theorem \ref{dual-better}, we need the following proposition regarding point-line polar duality.

\begin{proposition}
\label{fact-polar-duality}
Let $H$ be a set of half-planes such that $H$ does not cover $\Re^2$. Let $q\in \Re^2$ be a point not covered by $H$. Assume without loss of generality that $q$ is the origin, $q=(0,0)$. Let $p$ be a point contained in a half-plane $h\in H$, and let $l_h$ be the boundary line of $h$. Then the dual line $p^*$ intersects the segment $\overline{ql_h^*}$, where $l_h^*$ is the point dual to $l_h$. See Figure \ref{fig-fact-duality} for an illustration. 
\end{proposition}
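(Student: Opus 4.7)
The plan is to argue directly from the defining equations of the polar duality. Since $q=(0,0)$ lies in the open set $\Re^2\setminus\bigcup_{h\in H}h$, we may also assume (perturbing $q$ slightly inside the uncovered region if necessary) that $q$ does not lie on any boundary line $l_h$. Then $l_h$ can be written in the normalized form $l_h:ax+by=1$ for a unique $(a,b)$, and by definition $l_h^{\,*}=(a,b)$. Plugging $q=(0,0)$ into this equation gives $0<1$, so $q$ is on the side $\{ax+by<1\}$. Since $q\notin h$, the half-plane $h$ is necessarily $\{ax+by\ge 1\}$ (or $>1$, in the open case).

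Next, I would parameterize the segment $\overline{ql_h^{\,*}}$ as $\gamma(t)=(ta,tb)$ for $t\in[0,1]$, and check when $\gamma(t)$ lies on the dual line $p^{*}:\,p_1 x+p_2 y=1$ of $p=(p_1,p_2)$. Substituting gives the scalar equation $t(ap_1+bp_2)=1$, so the unique intersection is at
\[
t_0=\frac{1}{ap_1+bp_2}.
\]
Because $p\in h$, we have $ap_1+bp_2\ge 1$, which forces $t_0\in(0,1]$. Hence $\gamma(t_0)\in p^{*}\cap\overline{ql_h^{\,*}}$, as required.

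The argument is essentially a one-line calculation once the normalization $l_h:ax+by=1$ is in place, so there is no real obstacle. The only point that requires a sentence of care is pinning down the orientation of $h$: the sign condition $ap_1+bp_2\ge 1$ is precisely what makes $t_0\in(0,1]$, and it is guaranteed by the two hypotheses \emph{$q\notin h$} (giving the normalization in which $h$ is the $\{\ge 1\}$ side) and \emph{$p\in h$} (placing $p$ on that side). Everything else is a substitution in the polar duality.
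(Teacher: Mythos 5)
Your proof is correct and is essentially identical to the paper's own argument: both normalize $l_h$ as $ax+by=1$, use $p\in h$ and $q\notin h$ to get $ap_1+bp_2\ge 1$, and locate the intersection at the point $\frac{1}{ap_1+bp_2}(a,b)$ on the segment $\overline{ql_h^*}$. The only (harmless) addition is your remark about perturbing $q$ off the boundary lines, which the paper leaves implicit in choosing the normalization.
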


\begin{figure}

\centering

\subfigure[primal space]{
\includegraphics[scale=1]{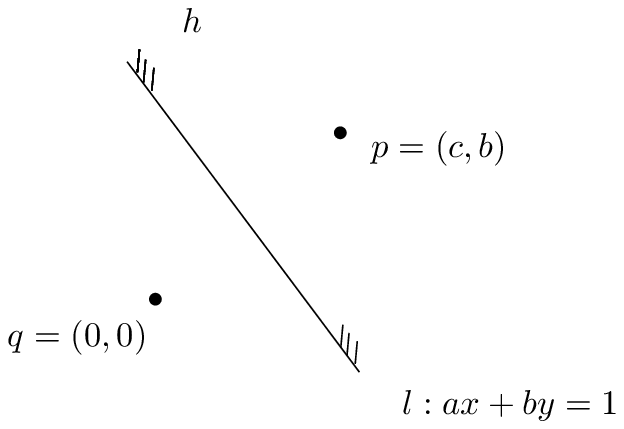}
}

\subfigure[dual space]{
\includegraphics[scale=1]{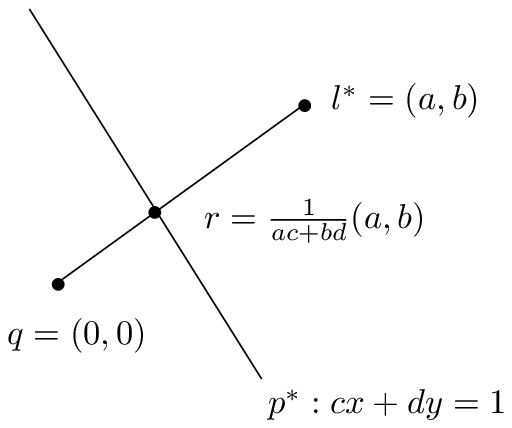}
}

\caption{\label{fig-fact-duality} point-line polar duality example}
\end{figure}

\begin{proof}
Assume that $l_h$ has parameterization $ax+by=1$ and $p=(c,d)$. We know that $h$ is a half-plane that does not contain the origin, i.e. $h=\{(x,y)|ax+by\geq 1\}$, therefore $ac+bd\geq 1$. The line that passes through the points $q=(0,0)$ and $l_h^*=(a,b)$ and the line $p^*$ intersect in the point $r=(\frac{a}{ac+bd},\frac{b}{ac+bd})=\frac{1}{ac+bd}(a,b)$. Since $ac+bd\geq 1$, $r$ must lie on the segment $\overline{ql_h^*}$. Hence the line $p^*$ intersects the segment $\overline{ql_h^*}$ as claimed.
\end{proof}

We are ready to prove Theorem \ref{dual-better}.

\begin {proof}
Let $H$ be a finite set of half-planes. If $H$ covers $\Re^2$, by Proposition \ref{fact-halfplanes}, we know that there is a subset $H_1 \subset H$ of 3 half-planes that covers $\Re^2$. If $H\setminus H_1$ covers $\Re ^2$, again by Proposition \ref{fact-halfplanes} there is a subset $H_2 \subset H\setminus H_1$ of 3 half-planes that covers $\Re^2$. Let $i$ be the number of subsets of half-planes that we find in such a way until we obtain a subset $H'=H\setminus \bigcup _{j=1}^{i} H_j$ such that $H'$ does not cover $\Re^2$. 

For every $1\leq j \leq i$, we assign to the 3 half-planes in $H_j$ the color $j$. If $i<k$ we color the half-planes in $H'$ with $k-i$ colors $\{i+1,i+2,...,k\}$ in the following way. Since $H'$ does not cover $\Re ^2$, there is a point $q\in \Re^2$ that is not covered by any half-plane in $H'$. Without loss of generality, assume that $q$ is the origin. Let $L_{H'}$ be the set of the bounding lines of the half-planes in $H'$. We transform the set of lines $L_{H'}$ to a set of points $L_{H'}^*$ using point-line polar duality. We color the points in $L_{H'}^*$ with $k-i$ colors using the colors $\{i+1,i+2,...,k\}$ such that every half-plane that contains at least $2(k-i)-1$ points is polychromatic. Such a coloring exists by Theorem \ref{thm-main}. 

Next, we show that the above coloring ensures that every point covered by at least $3k-2$ half-planes is also covered by some half-plane from every color class.
If $i\geq k$ then, by our construction every point in the plane is covered by a half-plane from every color class.
If $i<k$, let $p\in \Re^2$ be some point that is covered by at least $3k-2$ half-planes, $p$ is covered by at most $3i$ half-planes from the set $\bigcup _{j=1}^{i} H_j$, therefore $p$ is covered by at least $3k-2-3i=3(k-i)-2$ half-planes from $H'$. Since $k-i\geq 1$ we have that $p$ is covered by at least $2(k-i)-1$ half-planes from $H'$. Denote those covering half-planes by $H''$. By the property of our coloring of $H'$ we have that $H''$ contains a half-plane from every color class $c\in \{i+1,i+2,...,k\}$. Hence the point $p$ is polychromatic, i.e., covered by a half-plane from every color class $\{1,...,k\}$. This completes the proof.


\end {proof}

{\bf \noindent Remark:} Obviously, the result of Fulek
\cite{Fulek} implies that the bound $3k-2$ is not tight already
for $k=2$. It would be interesting to find the exact value of
$\bar{f}_{\H}(k)$ for every integer $k$.


\section{Small Epsilon-Nets for Half-Planes}

Consider a hypergraph $H=(P,E)$ where $P$ is a set of $n$ points
in the plane and $E=\{P\cap h: h \in \H\}$. As mentioned in the
introduction, Woeginger \cite{GW} showed that for any $1\geq\epsilon >
0$ there exists an $\epsilon$-net for $H$ of size at most
$\frac{2}{\epsilon}-1$.

As a corollary of Theorem~\ref{thm-main}, we obtain yet another
proof for this fact. Recall that for any integer $k \geq 1$ we
have $f_{\cal H}(k) \leq 2k-1$. Let $\epsilon > 0 $ be a fixed
real number. Put $k=\lceil\frac{\epsilon n+1}{2}\rceil$. Let
$\chi$ be a $k$-coloring as in Theorem~\ref{thm-main}. Notice that
every half-plane containing at least $\epsilon n$ points contains
at least $2k-1$ points of $P$. Indeed such a half-pane must
contain at least $\lceil \epsilon n\rceil = \lceil
2(\frac{\epsilon n+1}{2})-1\rceil = 2k-1$. Such a half-plane is
polychromatic with respect to $\chi$. Thus, every color class of
$\chi$ is an $\epsilon$-net for $H$. Moreover, by the pigeon-hole
principle one of the color classes has size at most $\frac{n}{k}
\leq \frac{2n}{\epsilon n+1} < \frac{2}{\epsilon}$. Thus such a
set contains at most $\frac{2}{\epsilon}-1$ points as asserted.

The arguments above are general and, in fact, we have the
following theorem:

\begin{theorem}
Let $\cal R$ be a family of regions such that $f_{\cal R}(k) \leq
ck$ for some absolute constant $c$ and every integer $k$. Then
for any $\epsilon$ and any finite set $P$ there exists an
$\epsilon$-net for $P$ with respect to $\cal R$ of size at most
$\frac{c}{\epsilon} -1$.
\end{theorem}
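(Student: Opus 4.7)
The plan is to repeat, with $c$ in place of $2$, the $\eps$-net argument just given for half-planes. Let $n=|P|$ and $\eps\in(0,1]$. I would first choose the integer $k=\lceil(\eps n+1)/c\rceil$, which satisfies $ck\geq \eps n+1>\eps n$. Consequently, every range $R\in{\cal R}$ with $|R\cap P|\geq \eps n$ automatically contains at least $ck$ points of $P$, and hence at least $f_{\cal R}(k)$ points by the hypothesis.

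Invoking the assumption $f_{\cal R}(k)\leq ck$, I would fix a $k$-coloring $\chi:P\to\{1,\ldots,k\}$ for which every range containing at least $ck$ points of $P$ is polychromatic. By the previous paragraph, this means every range with at least $\eps n$ points of $P$ meets each of the $k$ color classes, so each color class $\chi^{-1}(i)$ is an $\eps$-net for the hypergraph $(P,\{R\cap P:R\in{\cal R}\})$.

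Finally, I would apply the pigeonhole principle to bound the size of the smallest color class:
$$
\min_i |\chi^{-1}(i)| \;\leq\; \frac{n}{k} \;\leq\; \frac{cn}{\eps n+1} \;<\; \frac{c}{\eps}.
$$
Since a color class has integer size strictly smaller than $c/\eps$, its size is at most $c/\eps-1$ (interpreted as in the preceding half-plane case), producing an $\eps$-net of the claimed cardinality.

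I do not anticipate any real obstacle: the argument is a direct transcription of the half-plane derivation with the constant $2$ replaced by $c$, and all of the geometric content is packaged into the hypothesis $f_{\cal R}(k)\leq ck$. The only step that requires any attention is the ceiling arithmetic in the choice of $k$, which is entirely routine.
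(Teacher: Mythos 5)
Your overall strategy --- pick $k$ proportional to $\eps n/c$, invoke the hypothesis to get a $k$-coloring in which every sufficiently heavy range is polychromatic, and take the smallest color class by pigeonhole --- is exactly the paper's (the paper explicitly says its half-plane computation ``is general'' and restates it with $c$ in place of $2$). However, the central implication in your second step is asserted in the wrong direction, and this is a genuine gap. You choose $k=\lceil(\eps n+1)/c\rceil$ precisely so that $ck\geq \eps n+1>\eps n$, and then claim that ``consequently'' every range with at least $\eps n$ points of $P$ contains at least $ck$ points. That does not follow: making $ck$ exceed $\eps n$ puts $ck$ \emph{above} the threshold $\lceil\eps n\rceil$, not below it, so a range meeting the $\eps$-net threshold may contain fewer than $ck$ points. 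Concretely, take $c=2$, $n=10$, $\eps=0.35$: then $\eps n=3.5$, $k=3$ and $ck=6$, but a half-plane containing exactly $4\geq\eps n$ points need not contain $6$ points, and a coloring that is only guaranteed to make ranges with at least $ck$ points polychromatic may miss it. In that situation a color class need not be an $\eps$-net, so the second paragraph of your argument collapses.

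What you actually need is the reverse inequality $ck\leq\lceil\eps n\rceil$, so that every range with at least $\eps n$ points (hence at least $\lceil\eps n\rceil$ points, by integrality) contains at least $ck\geq f_{\cal R}(k)$ points. This forces $k$ to be the \emph{largest} integer with $ck\leq\lceil\eps n\rceil$, roughly $k=\lfloor\lceil\eps n\rceil/c\rfloor$, after which the pigeonhole bound $n/k$ must be re-estimated with this smaller $k$; that is exactly where the constant $c/\eps-1$ becomes delicate and where genuine rounding care is required (the final step ``an integer below $c/\eps$ is at most $c/\eps-1$'' also needs $c/\eps$ to be treated carefully). In fairness, the paper's own write-up rests on the identity $\lceil\eps n\rceil=2\lceil(\eps n+1)/2\rceil-1$, which likewise fails for some values of $\eps n$ (e.g.\ $\eps n=3.5$), so you have reproduced the paper's argument faithfully in outline; but the justification you give for the key step is a non sequitur rather than a proof, and that step is the one place in the argument where something real has to be checked.
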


Applying the above theorem for the dual range space defined by a
set of $n$ half-planes with respect to points and plugging
Theorem~\ref{dual-better} we conclude that there exists an
$\epsilon$-net for such a range-space of size at most
$\frac{3}{\epsilon}-1$. However, using a more clever analysis one
can, in fact, show that for every $\eps\leq \frac{2}{3}$ there is an $\epsilon$-net of size at most $\frac{2}{\epsilon}$ for such a range-space. Indeed, let $H$ be a set of half-planes. If $H$ does not cover $\Re^2$, by using the polar point-line transformation we can obtain a coloring of the half-planes $H$ such that every point that is covered by at least $2k-1$ half-planes is polychromatic. Hence there is an $\eps$-net of size $\frac{2}{\epsilon}-1$. If $H$ covers $\Re^2$ then by Proposition \ref{fact-halfplanes} we can find a set of 3 half-planes $G\subset H$ such that $G$ covers $\Re^2$. The set $G$ is an $\eps$-net, i.e. for $\eps\leq \frac{2}{3}$ we have an $\eps$-net of size $\frac{2}{\eps}$.

\subsubsection*{Acknowledgments.} We wish to thank Panagiotis Cheilaris and Ilan Karpas for helpful discussions concerning the problem studied in this paper.


\bibliographystyle{plain}
\bibliography{reference}

\end{document}